\documentclass{amsart}
\usepackage[utf8]{inputenc}

\title{A note on the Nielsen realization problem for Enriques manifolds}
\date{}
\author[S.\ Billi]{Simone Billi}

\usepackage[english]{babel}
\usepackage[T1]{fontenc}
\usepackage{amssymb,amsmath,amsthm,amsfonts}
\usepackage{thmtools,mathtools}
\usepackage{microtype}
\allowdisplaybreaks
\usepackage{enumitem}
\usepackage{tikz,tikz-cd}
\usepackage[breaklinks,pdfencoding=auto,psdextra]{hyperref}
\usepackage{bm} 
\usepackage[scaled]{beramono}

\usepackage{listings}
\usepackage{caption}
\usepackage{xcolor}
\usepackage{amsmath}
\usepackage{amssymb}
\usepackage{amsfonts}
\usepackage{mathrsfs}
\usepackage{amsthm}
\usepackage{graphicx}
\usepackage{floatflt}
\usepackage{epstopdf}
\usepackage{comment}
\usepackage{tikz-cd}
\usepackage{url}
\usepackage{appendix}

\lstdefinelanguage{Macaulay2}{
comment=[l]{--},
alsoletter={'},
alsoother={_},
}
\lstset{
framesep=1em,
basicstyle=\footnotesize\ttfamily,
commentstyle=\itshape\color{gray},
}

\newcommand{\hookuparrow}{\mathrel{\rotatebox[origin=c]{0}{$\hookrightarrow$}}}

\DeclareMathOperator{\bL}{\mathbf{L}}
\DeclareMathOperator{\bLambda}{\mathbf{\Lambda}}
\DeclareMathOperator{\Bl}{Bl}

\DeclareMathOperator{\Aut}{Aut} 
\DeclareMathOperator{\og10}{OG10}
\DeclareMathOperator{\Isom}{Isom} 
\DeclareMathOperator{\Ker}{Ker} 
\DeclareMathOperator{\Homology}{H}
\DeclareMathOperator{\Gr}{Gr}
\DeclareMathOperator{\GL}{GL}

\DeclareMathOperator{\NS}{NS}
\DeclareMathOperator{\T}{T}

\DeclareMathOperator{\SDiff}{SDiff}
\DeclareMathOperator{\Diff}{Diff}
\DeclareMathOperator{\Mod}{Mod}
\DeclareMathOperator{\SMod}{SMod}
\DeclareMathOperator{\Mon}{Mon}
\DeclareMathOperator{\Homeo}{Homeo}

\DeclareMathOperator{\E}{E}
\DeclareMathOperator{\B}{B}
\DeclareMathOperator{\M}{M}

\DeclareMathOperator{\id}{id}
\DeclareMathOperator{\SO}{SO}

\DeclareMathOperator{\bO}{\mathrm{O}}
\DeclareMathOperator{\II}{\mathbf{I}}
\DeclareMathOperator{\bU}{\mathbf{U}}
\DeclareMathOperator{\bE}{\mathbf{E}}
\DeclareMathOperator{\bM}{\mathbf{M}}

\newtheorem{thm}{Theorem}[section]
\newtheorem{lem}[thm]{Lemma}
\newtheorem{prop}[thm]{Proposition}
\newtheorem{definition}[thm]{Definition}
\newtheorem{cor}[thm]{Corollary}
\newtheorem{ese}[thm]{Example}
\newtheorem*{problem*}{Problem}
\newtheorem{question}{Question}
\newcommand{\hk}{hyper-K{\"a}hler}

\newcommand{\KTn}{K3\(^{[n]}\)}
\newcommand{\Kumn}{Kum\(_{n}\)}
\newcommand{\KTST}{K3$^{[n]}$ type}

\newtheorem{rmk}{Remark}
\begin{document}
\maketitle

\begin{abstract}
    We give a numerical criterion for the Nielsen realization problem for Enriques manifolds, based on the recent developments on the Birman–Hilden theory for \hk{} manifolds and on Nielsen realization for \hk{} manifolds. We apply the criterion to known examples of Enriques manifolds to get explicit groups that can be realized or not realized, and comment on questions related to the Nielsen realization problem. 
\end{abstract}
\section*{Introduction}
The \textit{Nielsen realization problem} was originally formulated by Nielsen in \cite{nielsen1942abbildungsklassen}, and then affirmatively solved by Kerckhoff in \cite{kerckhoff1983nielsen}. The question is whether any finite group $G$ of mapping classes of a complex curve can be lifted to a group of diffeomorphisms which preserve the metric and the complex structure. Equivalently, the question is if $G$ fixes any point in the Teichm\"uller space.\\
The answer to this problem for K3 surfaces is given by Farb and Looijenga in \cite{farb2021nielsen}, and a generalization to \hk{} manifolds is given by the author in \cite{billi2022note}. A version of the Birman--Hilden for \hk{} manifolds is proved by Raman in \cite{raman2024smooth}, this allows to relate the Nielsen realization problem for these to the Nilsen realization problem for their quotients.

\textit{Enriques manifolds} are complex manifolds whose universal cover is a \hk{} manifold; the classical example of Enriques manifolds is the one of Enriques surfaces, for which the Nielsen realization problem is settled in \cite[Theorem 8.5]{raman2024smooth}. The aim of this note is to discuss, to some extent, the Nielsen realization for higher dimensional Enriques manifolds. The first examples and the first definitions of Enriques manifolds were given in \cite{os,Boissiere.NieperWisskirchen.Sarti:enriques.varieties}, the definition we take in account in this note is the one given in \cite{os} (the definition in \cite{Boissiere.NieperWisskirchen.Sarti:enriques.varieties} allows universal covers to be strictly Calabi-Yau, for example). The fundamental group of an Enriques manifold is cyclic and its order is called the \textit{index} of the manifold. At the time being, all the known examples of Enriques manifolds are obtained as quotients of hyper-Kähler manifolds of \KTn{} type or \Kumn{} type and have index 2, 3 or 4; see \cite{os,Boissiere.NieperWisskirchen.Sarti:enriques.varieties}. It is known that there are no Enriques manifolds arsing as quotients of hyper-Kähler manifolds of OG10 type \cite{billi2025nonexistenceenriquesmanifoldsog10}.


\section*{Setting and results}

\subsection*{Mapping class group and the Birman--Hilden property}
Let \(M\) be a closed oriented differentiable manifold. The \textit{mapping class groups} is given by 
\[\Mod(M):= \pi_0(\Diff^+(M))\]
where \(\Diff^+(M)\) denotes the group of orientation-preserving diffeomorphisms of \(M\). If \(D\leq \Diff^+(M)\) is a subgroup acting properly and freely on \(M\), we let \(\SDiff^+_D(M)\) be the centralizer of \(D\) in \(\Diff^+(M)\). In this situation, following \cite{raman2024smooth}, we define the \textit{symmetric mapping class group} as 
\[\SMod_D(M):= \pi_0(\SDiff^+_D(M)).\]
Let \(Y\) be an Enriques manifold and consider its universal covering map \(p\colon X\to Y\), where \(X\) is \hk{} and \(Y=X/D\) for a group of holomorphic automorphisms \(D\leq \Aut(X)\) acting freely. In this case, \(D\cong\pi_1(Y)\) and it is a cyclic group by \cite[Proposition 2.4]{os}. The covering map \(p\) has the Birman--Hilden property by \cite[Theorem A]{raman2024smooth}, i.e. there is an induced morphism of groups 
\[\widetilde{p}\colon \SMod_D(X)\to \Mod(Y)\] 
which gives an isomorphism \(\SMod_D(X)/D\cong \Mod(Y)\). If \(G\leq \Mod(Y)\) we consider its lift \(\widetilde{G}=\widetilde{p}^{-1}(G) \leq \SMod_D(X)\).

\subsection*{Hyper-Kähler manifolds and a lattice invariant}
Let us recall that a hyper-Kähler manifold \(X\) is a compact Kähler manifold such that \(\pi_1(X)=\{1\}\) and \(\Homology^0(X,\Omega_X^2)= \mathbb{C}\cdot \sigma\) for an everywhere non-degenerate form \(\sigma\). We refer to \cite{debarre2022hyper,gross2012calabi} for an account. 

Hyper-Kähler manifolds that are deformation equivalent to a Hilbert scheme of \(n\geq 2\) points \(S^{[n]}\) on a K3 surface \(S\) are called of \textit{K3\(^{[n]}\) type}. Hyper-Kähler manifolds that are deformation equivalent to \(K_n(A):=s^{-1}(0)\) for \(n\geq 2\), where \(s\colon A^{[n+1]}\to A\) is the Albanese map (taking the sum of points) and \(A\) is an abelian surface, are called of \textit{Kum\(_n\) type}.

Let \(X\) be \hk{}. We can consider the lattice \(\bLambda_X=(\Homology^2(X,\mathbb{Z}),q_X)\) of signature \((3,b_2(X)-3)\), where \(q_X\) is the Beauville--Bogomolov--Fujiki quadratic form (which is a topological invariant by the Fujiki relation), and its group of orientation-preserving isometries \(\bO^+(\bLambda_X)\). 

For the following facts, we refer to the inspirational setting for K3 surfaces \cite{farb2021nielsen}, which was adapted to the case of hyper-Kähler manifolds in \cite{looijenga2021teichmuller,billi2022note}. There is a representation of the mapping class group on cohomology 
\[\rho_X\colon \Mod(X)\to \bO^+(\bLambda_X).\] Since $\Gr^+(3,\bLambda_X\otimes\mathbb{R})$ is the symmetric space of $\bO^+(\bLambda_X\otimes\mathbb{R})$, it is non-positively curved and then a finite group $\widetilde{G}\leq \Mod(X)$ must fix a point $P$. 
This means that $P$ is a $\widetilde{G}$-invariant positive $3$-space, and hence there is a linear representation $\widetilde{G}\rightarrow \SO(P)$ of $\widetilde{G}$. Let $\II_{\widetilde{G}}$ be the sum of all the irreducible $\widetilde{G}$-subrepresentations of $\bLambda_X\otimes\mathbb{R}$ which are isomorphic to any of the ones appearing in $P$, which is indeed not depending on \(P\), and set 
\[ \bL_{\widetilde{G}}:=\II_{\widetilde{G}}^\perp\cap \bLambda_X.\]
We say that a linear form $\delta\in\bLambda_X^\vee$ is \textit{negative} if its kernel has signature $(3,b_2(X)-4)$, that is equivalent to \(\delta^\vee\) having negative square.
If $\mathcal{C}$ is a connected component of the Teichm\"uller space \(\mathcal{T}_{Ein}(X)\) of Einstein metrics on \(X\), we let $\Delta_\mathcal{C}\subset \bLambda_X^\vee$ be the set of indivisible negative forms which are represented by an irreducible rational curve for a \hk{} metric with underlying Einstein metric belonging to $\mathcal{C}$, see \cite[Subsection 5.1]{looijenga2021teichmuller}. 

\subsection*{Results}

As a consequence of \cite[Theorem A and Theorem E]{raman2024smooth}, and \cite[Theorem 1.3]{billi2022note}, we prove the following:
\begin{thm}\label{thm1}
	Let \(Y\) be an Enriques manifold with universal cover \(p\colon X\to Y\) and \(D=\pi_1(Y)\). Let \(G\leq \Mod(Y)\) be a finite subgroup and \(\widetilde{G}=\tilde{p}^{-1}(G)\leq \SMod_D(X)\). \begin{enumerate}
	    \item Then $G$ is realized as a group of isometries for an Einstein metric if and only if $\widetilde{G}$ fixes a connected component $\mathcal{C}$ of $\mathcal{T}_{Ein}(X)$, the lattice $\bL_{\widetilde{G}}^\perp$ contains the trivial representation and $\bL_{\widetilde{G}}\cap\Delta_\mathcal{C}=\emptyset$. 
        \item Moreover, if \(G\) is realized then the Einstein metric can be chosen to be Kähler.
	\end{enumerate}
    
\end{thm}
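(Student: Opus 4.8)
The plan is to transfer the realization problem from the Enriques manifold $Y$ to its hyper-Kähler cover $X$ using the Birman--Hilden property, and then to apply the hyper-Kähler realization criterion of \cite[Theorem 1.3]{billi2022note} verbatim to the lift $\widetilde{G}$. The three numerical conditions in the statement are precisely those appearing in that criterion for $\widetilde{G}\leq\Mod(X)$, so the whole content of the theorem will be the reduction from $Y$ to $X$.

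First I would record the structural facts about $\widetilde{G}$. Since $\widetilde{G}=\tilde{p}^{-1}(G)$ and the Birman--Hilden isomorphism $\SMod_D(X)/D\cong\Mod(Y)$ has kernel $D$, we get a short exact sequence $1\to D\to\widetilde{G}\to G\to 1$. In particular $D\trianglelefteq\widetilde{G}$, and $\widetilde{G}$ is finite because it is an extension of the finite group $G$ by the finite cyclic group $D$. The inclusion $D\leq\widetilde{G}$ is the crucial point: any Einstein metric on $X$ that is fixed by $\widetilde{G}$ is automatically $D$-invariant, so the symmetry needed to descend to $Y$ comes for free once $\widetilde{G}$ is realized.

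Next I would set up the correspondence between the two realization problems. By \cite[Theorem E]{raman2024smooth}, pulling back along $p$ identifies Einstein metrics on $Y$ with $D$-invariant Einstein metrics on $X$, and this identification is equivariant for the Birman--Hilden isomorphism: realizing $G$ by isometries of an Einstein metric on $Y$ is the same as finding a $\widetilde{G}$-fixed $D$-invariant point in $\mathcal{T}_{Ein}(X)$. By the observation of the previous paragraph, a $\widetilde{G}$-fixed point is automatically $D$-invariant, so the $D$-invariance constraint is vacuous and the two problems are equivalent: $G$ is realized on $Y$ if and only if $\widetilde{G}$ is realized as a group of Einstein isometries on $X$. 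Applying \cite[Theorem 1.3]{billi2022note} to $\widetilde{G}$ then yields exactly the three stated conditions, and gives part (2) as well, since the Einstein metric produced there is Kähler and a $D$-invariant Kähler metric descends to a Kähler metric on $Y$, the holomorphic action of $D$ preserving the complex structure.

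The step I expect to be the main obstacle is making the correspondence of the third paragraph fully precise. One must check that the isometry group realized on $X$ descends to exactly $G$ on $Y$---neither a proper subgroup nor a larger one---which requires the lift of an isometry of $(Y,g)$ to be a $D$-symmetric isometry of $(X,p^*g)$ and conversely, compatibly with $\tilde{p}$. One must also verify that the lattice invariants $\bL_{\widetilde{G}}$, the component $\mathcal{C}$, and the set $\Delta_\mathcal{C}$ are the ones attached to the $\widetilde{G}$-action on $\bLambda_X$ via $\rho_X$, and that the invariant positive $3$-space $P$ can be chosen compatibly with the $D$-action so that the decomposition defining $\II_{\widetilde{G}}$ is the relevant one for the quotient.
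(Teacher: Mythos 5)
Your overall route coincides with the paper's: use the Birman--Hilden exact sequence $1\to D\to\widetilde{G}\to G\to 1$ to identify realizing $G$ on $Y$ with realizing $\widetilde{G}$ on $X$, observe that $D\leq\widetilde{G}$ makes $D$-invariance of any $\widetilde{G}$-invariant metric automatic (so the metric descends), and quote \cite[Theorem 1.3]{billi2022note} for the three numerical conditions. That reduction is exactly how the paper proves part (1) and the descent in part (2).

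The one genuine gap is the K\"ahler statement, which you justify by ``the holomorphic action of $D$ preserving the complex structure.'' That phrase refers to the original complex structure on $X$, which is not the relevant one: the Einstein metric in play (whether lifted from $Y$ or produced by the realization theorem) carries a whole $2$-sphere of compatible complex structures, parametrized by the invariant positive $3$-plane $P$, and what has to be shown is that $D$ and $\widetilde{G}$ simultaneously fix a line in $P$. The paper supplies this with a short argument you do not have: the images of $\widetilde{G}$ and $D$ commute in $\SO(P)\cong\SO(3)$ because $\widetilde{G}$ lies in the centralizer of $D$, and since a maximal torus of $\SO(3)$ is one-dimensional, the commuting elements share a common $1$-eigenspace; the resulting line $l\subset P$ defines a complex structure invariant under both groups, so the Einstein metric is K\"ahler--Einstein for a $D$-invariant complex structure and the K\"ahler structure descends to $Y$ (this is also what the paper uses, together with \cite[Theorem 8.6]{raman2024smooth}, in the forward direction of (1) to upgrade the lifted Einstein metric to a K\"ahler--Einstein one before invoking the necessity part of \cite[Theorem 1.3]{billi2022note}). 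You correctly flag the compatibility of $P$ with the $D$-action as the expected obstacle, but you leave it unresolved; this $\SO(3)$ common-eigenspace step is precisely the missing ingredient.
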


This gives the possibility to concretely control the realization of groups of mapping classes in \(\Mod(Y)\) via the cohomological action of their lift to \(\SMod(X)\). 

We give a remark about \cite[Question 8.9]{raman2024smooth}, concerning Enriques surfaces, in the case of Enriques manifolds. For an Enriques manifold \(X\to Y\), in order to lift a group of mapping classes \(G\leq \Mod(Y)\) to a \(D\)-invariant \(G\)-action on \(X\) the study of the central extension 
\[1\to D\to \SMod(X)\to \Mod(Y)\to 1\]
given by the Birman-Hilden theory is needed. A priori, the group of lifts \(\widetilde{G}\) is just an extension of the group \(G\). The question is then if \(\SMod(X)=\Mod(Y)\times D\), which is certainly implied by \(\Homology^2(\Mod(Y),D)=0\). Our contribution is a partial negative answer for Enriques manifolds, contingent to the existence of Enriques manifolds which are intermediate quotients of hyper-Kähler manifolds. 

\begin{prop}\label{thm:central_extension}
    There exists an Enriques manifold \(Y\) such that the central extension \[1\to D\to \SMod(X)\to \Mod(Y)\to 1\]
    does not admit a section. In particular, in this case we have \(\Homology^2(\Mod(Y),D)\not=0.\)
    

\end{prop}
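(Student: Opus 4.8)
The plan is to exhibit an Enriques manifold $Y$ whose universal cover $X$ is an intermediate quotient of a larger hyper-Kähler manifold, so that the central extension
\[
1\to D\to \SMod(X)\to \Mod(Y)\to 1
\]
fails to split. The key idea is to \emph{chain} two Birman--Hilden quotients. Suppose there is a hyper-Kähler manifold $Z$ with a free action of a finite cyclic group $H\cong \mathbb{Z}/m\mathbb{Z}$ such that an intermediate quotient $X=Z/D'$ is again hyper-Kähler (with $D'\leq H$ a proper nontrivial subgroup) and $Y=Z/H$ is the Enriques manifold, with $D=H/D'\cong\pi_1(Y)$. First I would record that $\Mod(Y)$ then receives \emph{two} compatible descriptions: directly via the Birman--Hilden property for $X\to Y$, giving $\SMod_D(X)/D\cong\Mod(Y)$, and via the composite covering $Z\to Y$. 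The strategy is to transport the non-splitting of the extension upstairs, where the cohomology of $H$ acting on the cohomology lattice of $Z$ can be computed, down to the extension by $D$.

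The central step is a cohomological obstruction computation. The extension class lives in $\Homology^2(\Mod(Y),D)$, and I would pull it back along the representation $\rho_Y\colon\Mod(Y)\to\bO^+(\bLambda_Y)$ (and its lift to $\bO^+(\bLambda_Z)$ for the larger manifold) to reduce it to a computation on the arithmetic groups of lattice isometries, where the action on $\Homology^2(Z,\mathbb{Z})$ of the deck group is explicit. Concretely, I would look for a specific pair of lifts $\tilde g,\tilde h\in\SMod(X)$ of commuting elements $g,h\in\Mod(Y)$ whose commutator $[\tilde g,\tilde h]$ equals a nontrivial element of $D$; the existence of such a pair is exactly an obstruction to any section $\Mod(Y)\to\SMod(X)$ being a homomorphism, since a section would force all commutators to lift to the identity in $D$. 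Equivalently, I would isolate a $2$-cocycle representing the extension and show it is not a coboundary by evaluating it on an explicit finite subgroup of $\Mod(Y)$, using the compatibility of $\rho_X$ and $\rho_Y$ through $\tilde p$ to make the evaluation computable at the level of the lattices $\bLambda_X$ and $\bLambda_Y$.

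The final implication $\Homology^2(\Mod(Y),D)\neq 0$ is then immediate: a central extension of $\Mod(Y)$ by the abelian (indeed cyclic) group $D$ is classified up to equivalence by $\Homology^2(\Mod(Y),D)$, and the trivial class corresponds exactly to the split extension $\SMod(X)\cong\Mod(Y)\times D$; so exhibiting a non-split extension exhibits a nonzero class. I would state this as a standard fact from group cohomology, taking care that $D$ is central (which holds since $\SDiff^+_D(X)$ is the centralizer of $D$, so $D$ is central in $\SMod_D(X)$) so that the classification by $\Homology^2$ applies verbatim.

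The main obstacle is the hypothesis itself: the statement is explicitly contingent on the existence of an Enriques manifold arising as an \emph{intermediate} quotient of a hyper-Kähler manifold, i.e.\ on the existence of the chain $Z\to X\to Y$ with all the required freeness and hyper-Kähler properties. I would therefore be careful to phrase the proposition conditionally, and the hard part of a fully unconditional argument would be producing such a $Z$ explicitly — most plausibly by taking $Z$ of $\mathrm{K3}^{[n]}$ or $\Kumn$ type with a free $\mathbb{Z}/4\mathbb{Z}$ or composite cyclic action, factoring through an intermediate hyper-Kähler quotient, and verifying freeness of the intermediate deck transformations. Granting such a $Z$, the remaining genuine work is checking that the commutator (equivalently, the cocycle) lands nontrivially in $D$ rather than becoming a coboundary, which I expect to require pinning down the precise action of the full deck group $H$ on $\Homology^2(Z,\mathbb{Z})$ and its invariant sublattices.
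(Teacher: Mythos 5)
Your overall strategy---find an Enriques manifold sitting in a tower of quotients and detect non-splitting by restricting the extension to a small subgroup, then invoke the classification of central extensions by $\Homology^2$---is the right shape, and the last step matches the paper. But there are two concrete problems. First, your configuration is set up backwards and is in fact impossible as stated: if $Z$ is hyper-K\"ahler (hence simply connected) and $D'\leq H$ acts freely and nontrivially, then the intermediate quotient $X=Z/D'$ has $\pi_1(X)=D'\neq 1$ and so cannot be hyper-K\"ahler; moreover $\pi_1(Z/H)=H$, not $H/D'$. In the paper the hyper-K\"ahler manifold $X$ sits at the \emph{top} of the tower: one starts from an Enriques manifold $Z=X/\langle\varphi\rangle$ of composite index $d$ (the known index-$4$ quotients of $K_n(A)$ with $4\mid n+1$, so the existence question you defer is actually settled by known examples) and takes $Y=X/\langle\varphi^{2}\rangle$ as the intermediate quotient; this $Y$ is again an Enriques manifold whose universal cover is $X$, with $D=\langle\varphi^{2}\rangle$.

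Second, and more seriously, the obstruction you plan to compute---a commutator $[\tilde g,\tilde h]$ of lifts of commuting $g,h\in\Mod(Y)$ landing nontrivially in $D$---cannot detect the non-splitting that is actually present. The subgroup doing the work is the cyclic group $\langle\varphi\rangle$ itself, which lies in $\SMod_D(X)$ because $\varphi$ centralizes its own powers; the preimage of any cyclic subgroup of $\Mod(Y)$ in a central extension is abelian, so every commutator of lifts vanishes there and your proposed cocycle evaluation would come out trivial. The obstruction the paper uses is about \emph{orders}, not commutators: restricting $1\to D\to\SMod(X)\to\Mod(Y)\to 1$ to $\langle\varphi\rangle$ gives $0\to\mathbb{Z}/2\mathbb{Z}\to\mathbb{Z}/4\mathbb{Z}\to\mathbb{Z}/2\mathbb{Z}\to 0$ (here $D$ injects into $\SMod_D(X)$ by exactness, so $\varphi$ really has order $4$ there), and every lift of the order-$2$ class $\bar\varphi$ has order $4$, so no section can exist. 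In particular, no pullback along $\rho_Y$ and no computation on the lattice isometry groups is needed; the argument is purely group-theoretic once the intermediate quotient is exhibited.
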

The question in general remains open, and there is still the possibility of having \(\Homology^2(\Mod(Y),D)=0\) when the Enriques manifold is not an intermediate quotient.
\section*{Outline}

In \autoref{proof_main_thm} we prove \autoref{thm1}. In \autoref{examples} we briefly recall the construction of the known examples of Enriques manifolds constructed in \cite{os,Boissiere.NieperWisskirchen.Sarti:enriques.varieties}, in \autoref{sec:examples_of_realiz} we discuss the realization of some groups of mapping classes for these examples. In \autoref{sec:a_non-trivial_ext} we prove \autoref{thm:central_extension}.

\section*{Acknowledgments}
The author would like to thank Giovanni Mongardi and Stevell Muller for useful comments and for reading a preliminary version of the manuscript. The author is thankful to an anonymous referee for pointing out errors in the previous version.


\section{Proof of the numerical criterion}\label{proof_main_thm}

A combination of the results in \cite{raman2024smooth} and \cite{billi2022note} easily give a proof of \autoref{thm1}. For the proof, we follow the ideas of \cite[Theorem 8.5]{raman2024smooth}.

\begin{proof}[Proof of \autoref{thm1}] 

Let us prove (1).
Suppose that \(G\) is realized as a group of isometries for an Einstein metric \(h\) on \(Y\). Clearly the lift \(\widetilde{G}\) preserves a lifted Einstein metric \(\widetilde{h}\) on \(X\), and hence a positive \(3\)-plane \(P_{\widetilde{h}}\in \Gr^+(3,\bLambda_X\otimes \mathbb{R})\). This gives a representation \(\widetilde{G}\to \SO(P_{\widetilde{h}})\cong \SO(3)\) and, since \(\widetilde{G}\) is in the centralizer of \(D\), the elements of \(D\) and elements of \(\widetilde{G}\) commute and in particular their images must commute in \(\SO(3)\). From the fact that the dimension of a maximal torus of \(\SO(3,\mathbb{R})\) is \(1\), the elements of \(\widetilde{G}\) and \(D\) must have a common \(1\)-eigenspace. This gives a line \(l\subset P_{\widetilde{h}}\) which defines a \(\widetilde{G}\)-invariant complex structure. In conclusion, we see that \(\widetilde{G}\) lifts to a group of isometries for a Kähler-Einstein metric on \(X\), where Kähler is basically given by \cite[Theorem 8.6]{raman2024smooth}. From the second item of \cite[Theorem 1.3]{billi2022note} it follows that \(\widetilde{G}\) fixes a component \(\mathcal{C}\) for which \(\bL_{\widetilde{G}}\cap \Delta_\mathcal{C}=\emptyset\) together with the fact that \(\bL_{\widetilde{G}}^\perp\) contains the trivial representation. The viceversa of the statement also follows.

We now prove (2). Suppose that the lift $\widetilde{G}$ fixes a connected component $\mathcal{C}$ of $\mathcal{T}_{Ein}(X)$, that the lattice $\bL_{\widetilde{G}}^\perp$ contains the trivial representation and that $\bL_{\widetilde{G}}\cap\Delta_\mathcal{C}=\emptyset$. Then by \cite[Theorem 1.3]{billi2022note} we have that \(\widetilde{G}\) lifts to a group of isometries for a Kähler-Einstein metric \(\widetilde{h}\) on \(X\). Arguing as in the proof of \cite[Theorem 8.5]{raman2024smooth} we have a central extension 
\[1\to D\to\widetilde{G}\to G\to 1,\]
from which we deduce that any Kähler-Einstein metric preserved by \(\widetilde{G}\) is also \(D\)-invariant and hence descends to a \(G\)-invariant Kähler-Einstein metric \(h\) on \(Y\).

\end{proof}
\section{The known examples}\label{examples}
In this section we recall the known examples of Enriques manifolds, which were introduced in \cite{Boissiere.NieperWisskirchen.Sarti:enriques.varieties,os}.
\subsection{Enriques manifolds from K3 surfaces}\label{sec:examples_K3}
Let \(S\) be a complex K3 surface with a fixed-point free holomorphic (indeed algebraic) involution \(\iota \in \Aut(S)\), then the surface \(E=S/\langle \iota \rangle\) is an Enriques surface. Consider the induced involution \(\iota^{[n]}\) on the Hilbert scheme of points \(X=S^{[n]}\) with \(n\geq 1\) is odd, then he was shown in \cite[Proposition 4.1]{os} that the quotient \(Y=S^{[n]}/\langle\iota^{[n]}\rangle\) is an Enriques manifold of complex dimension \(2n\).

More generally, let \(S\) be as above with Picard rank \(\rho(S)=10\) and fix a vector \(v=(r,l,\chi -r)\in \Homology^{ev}(S,\mathbb{Z})=\Homology^{0}(S,\mathbb{Z})\oplus\Homology^{2}(S,\mathbb{Z})\oplus \Homology^{4}(S,\mathbb{Z})\) with \(v^2:=l^2-2r(r-\chi)\geq 0\) and \(\chi \in\mathbb{Z}\) odd. For a very general polarization \(H\in \NS_\mathbb{R}(S)\), the moduli space \(X=M_H(S,v)\) of \(H\)-semistable sheaves on \(S\), with Mukai vector \(v\), admits a fixed-point free involution whose quotient \(X\to Y\) is an Enriques manifold of dimension \(v^2+2\) by \cite[Theorem 5.3]{os}.

In the above cases we have \(D\cong \mathbb{Z}/2\mathbb{Z}\). Moreover, we have the following lattice isometries
\begin{align*}
\bLambda_X &:=\Homology^2(X,\mathbb{Z})\cong \bU^{\oplus 3}\oplus \bE_8(-1)^{\oplus 2}\oplus [-2(n-1)]    \\
\bLambda_Y &:=\Homology^2(Y,\mathbb{Z})_{tf}\cong\bU\oplus \bE_8(-1)\oplus [-(n-1)]
\end{align*} where \(\Homology^2(Y,\mathbb{Z})_{tf}\) denotes the torsion-free part of the cohomology. If \(p\colon X\to Y\) denotes the covering map, there are induced maps
\begin{align*}
    p^* & \colon \Homology^2(Y,\mathbb{Z})\to \Homology^2(X,\mathbb{Z})\\
    p_!&\colon \Homology^2(X,\mathbb{Z})\to \Homology^2(Y,\mathbb{Z})
\end{align*}
where the composition \(p_!\circ p^*\) is the multiplication by \(2\), see \cite[Example 4.4]{raman2024smooth}. 
\subsection{Enriques manifolds from abelian surfaces}
Let \(S\) be a \textit{bielliptic surface}, i.e. an algebraic complex surface which is not abelian but admits a finite étale cover by an abelian surface \(A\). The covering \(A\to S\) turns out to be cyclic and we denote its order by \(d\). 
It was shown in \cite[Theorem 6.4]{os} and \cite[Proposition 4.1]{Boissiere.NieperWisskirchen.Sarti:enriques.varieties} that if \(d|n+1\) then the covering automorphism induces an automorphism of \(K_n(A)\) and in some cases the action is free, leading to an Enriques manifold \(X\to Y\) where \(X=K_n(A)\) and the universal covering map is the quotient map. The only suitable values are \(d=2,3,4\) and in this case we have \(D\cong \mathbb{Z}/d\mathbb{Z}\).
Moreover, we have lattice isometries
\begin{align*}
\Homology^2(X,\mathbb{Z})\cong \bU^{\oplus 3}\oplus [-2(n+1)]    \\
\Homology^2(Y,\mathbb{Z})_{tf}\cong \bU\oplus [-2(n+1)/d]
\end{align*} and again, if \(p\colon X\to Y\) denotes the covering map, there are induced maps
\begin{align*}
    p^* & \colon \Homology^2(Y,\mathbb{Z})\to \Homology^2(X,\mathbb{Z})\\
    p_!&\colon \Homology^2(X,\mathbb{Z})\to \Homology^2(Y,\mathbb{Z})
\end{align*}
where the composition \(p_!\circ p^*\) is the multiplication by \(d\), see \cite[Example 4.6]{raman2024smooth}. 


\section{Some examples of realizations and non-realizations}\label{sec:examples_of_realiz}

If \(\bM\) is a lattice, then we use the notation \(\Gamma_2^+(\bM)\leq\bO^+(\bM)\) for the kernel of the reduction modulo \(2\) map \(\bO^+(\bM,\mathbb{Z})\to 
\bO(\bM,\mathbb{Z}/2\mathbb{Z})\). The group \(\Gamma_2^+(\bM)\) is usually called the group of \textit{2-congruences}. In our case we set \(\bM:=\bU\oplus \bE_8(-1)\).

Clearly, some groups of mapping classes can be realized via automorphisms.

\begin{ese}[Some realizations] \label{ex:realizations}
The following groups are realizable. 
    \begin{itemize}
        \item If \(X=S^{[n]}\) with \(n\) odd and \(X\to Y\) an Enriques manifold of index \(d=2\) as in \cite{os}, then any finite \(G\leq \Gamma_2^+(\bM)\) is realizable. In fact, by \cite[Theorem 3.3]{barth1983automorphisms} \(G\) lifts to a group of automorphisms of \(S\) commuting with the Enriques involution and this induces a group of automorphisms of \(X\) which descens to a group of automorphisms \(G\) of \(Y\).


 \item Let \(A=E\times E\) where \(E\cong \mathbb{C}/(\mathbb{Z}\oplus \xi_d \mathbb{Z})\) with \(\xi_d\) a primitive \(d\)-root of \(1\), and \(d=2,3,4\). Let \(\psi(x,y)=(\xi_d x+1/d,y+1/d)\) for \((x,y)\in A\), then \(\psi\) induces a fixed-point free automorphism on \(X=K_n(A)\) when \(d| n+1\), which gives an Enriques manifold \(X\to Y=X/\langle\psi\rangle\) (as in \cite[Proposition 4.1]{Boissiere.NieperWisskirchen.Sarti:enriques.varieties} and \cite[Theorem 6.4]{os}). Let \(y_0\in E[d]\) be a \(d\)-torsion point, then translation by \((0,y_0)\) on \(A\) commutes with \(\psi\) and preserves \(K_n(A)\), hence it induces an automorphism of \(Y\) which gives a cyclic group of mapping classes \(G\leq\Mod(Y)\) if \(y_0\not= 1/d\). By construction \(G\) is realizable as a group of isometries for a Kähler-Einstein metric and \(G\) acts trivially on the lattice \(\Homology^2(Y,\mathbb{Z})_{tf}\).
    \end{itemize}
\end{ese}

In the following we also exhibit groups of mapping classes that do not admit a lift to a group of isometries for an Einstein metric, applying the criterion of \autoref{thm1}.

In case \(X\) is of K3\(^{[n]}\) then \(\bM\) is the unimodular summand of \(\bLambda_Y\). Notice that there is a morphism of groups
\[\gamma\colon \bO^+_D(\bLambda_X)\to \bO^+(\bLambda_Y)\]
where \(\bO^+_D(\bLambda_X)\) denotes the centralizer of the induced action of \(D\) in \(\bO^+(\bLambda_X)\).
By Nikulin theory, we have that any isometry in the 2-congruence subgroup \(\Gamma_2^+(\bM)\) of \(\bO^+(\bM)\) induces a unique isometry of \(\bLambda_X\) via the embedding \(\bM(2)\subset \bLambda_X\), which restricts to the identity on the sublattice \(\bM(2)^\perp\subset \bLambda_X\), see \cite[Lemma 1.3]{barth1983automorphisms}. By construction these isometries commute with the action of \(D\), and the above discussion gives an embedding
\[\Gamma_2^+(\bM)\hookrightarrow \bO^+_D(\bLambda_X).\]
We have the following commutative diagram 
    \begin{equation*}\label{inclusions}
\begin{tikzcd}
\SMod_D(X) \arrow[r] \arrow[r, "\widetilde{p}"] \arrow[d, "\rho_X"'] & \Mod(Y) \arrow[d, "\rho_Y"] \\
\bO^+_D(\bLambda_X) \arrow[r, "\gamma"]                              & \bO^+(\bLambda_Y)      
\end{tikzcd}
\end{equation*}
where the lower horizontal map is an isomorphism when restricted to \(\Gamma_2^+(\bM)\).

\begin{ese}[Some non-realizations]\label{ex:non_realizations} Let \(X\to Y\) be an Enriques manifold as in \cite{os} with \(X\) of K3\(^{[n]}\) type with \(n\) odd. Then by \cite[Corollary 1.3]{billi2022note} the map \(\Mod(X)_\mathcal{C}\to\bO^+(\bLambda_X)\) has a section over its image \(\Mon^2(X)\), the monodromy group, where \(\Mod(X)_\mathcal{C}\) is the stabilizer of a component \(\mathcal{C}\subset\mathcal{T}_{Ein}\).
    \begin{itemize}
        \item Consider the isometry of the lattice \(\bLambda_Y=\Homology^2(Y,\mathbb{Z})_{tf}\) acting by \((-1)\) on the generator of \([-(n-1)]\) and by the identity on its complement, this involution is of monodromy \cite{markman2011survey}. The isometry extends to an isometry of \(\bLambda_X=\Homology^2(X,\mathbb{Z})\), the section of the representation map gives an order two subgroup \(\widetilde{G}\leq \SMod(X)\) and this descends to an order two subgroup \(G\leq \Mod(Y)\). The group \(G\) is not realizable as isometries of an Einstein metric since the generator of \(\bL_{\widetilde{G}}=[-2(n-1)]\) belongs to \(\Delta_{\mathcal{C}}\).
            \item Similarly, the reflection in \(\bLambda_Y\) along a vector of square \(-2\) is monodromy and it is a 2-congruence, hence as before it gives a group of mapping classes \(G\leq \Mod(Y)\) which is not realizable. 
        
    \end{itemize}

\end{ese}
    In the case of surfaces, the second kind of non-realizable reflections can be represented by a Dehn twist, a diffeomorphism which is not topologically isotopic to any finite order diffeomorphism but whose square is smoothly isotopic to the identity \cite[Theorem D]{raman2024smooth}.
            We expect that such \(G\) is also represented by a diffeomorphism whose square is smoothly isotopic to the identity. The first kind of non-realizable reflections has no counterpart in the surface case.

\color{black}

\section{A non-trivial Birman-Hilden central extension}\label{sec:a_non-trivial_ext}



We now observe that for an Enriques manifold constructed as a quotient of a generalized Kummer manifold, with fundamental group \(D\cong \mathbb{Z}/d\mathbb{Z}\) for \(d\) not a prime number, there exists another Enriques manifold \(Y\) such that \(\Homology^2(\Mod(Y),D)\not=0\).

\begin{proof}[Proof of {\autoref{thm:central_extension}}]
Assume \(Z\) is an Enriques manifold with \(\pi_1(Z)\cong \mathbb{Z}/d\mathbb{Z}\) where \(d\) is not prime. This means that there exists a hyper-Kähler manifold \(X\) and \(\varphi \in \Aut(X)\) such that \(Z=X/\langle \varphi \rangle\). In this case also \(Y=X/\langle\varphi^k\rangle\) is an Enriques manifold, where we choose some \(1<k< d\) dividing \(d\). We have \(D=\langle\varphi^k\rangle\) and we can see \(\varphi\) as an automorphism of \(Y\) of order \(m=\frac{d}{k}\). Assume moreover that the natural map \(\langle\phi\rangle\to \SMod(X)\) is injective. Then the central exact sequence 
    \[1\to D\to \SMod(X) \to \Mod(Y)\to 1\] when restricted to the group generated by \(\varphi\) gives the exact sequence 
    \[1\to \langle \varphi^k \rangle\to  \langle \varphi\rangle\to \langle \varphi \rangle/\langle \varphi^k \rangle\to 1,\] which is equivalent to the central extension 
    \[0\to \mathbb{Z}/k\mathbb{Z}\to \mathbb{Z}/d\mathbb{Z}\to \mathbb{Z}/m\mathbb{Z}\to 0,\] that admits no sections. In particular, \(\Homology^2(\Mod(Y),D)\not=0\).

    The situation for example happens in the index \(4\) examples of \cite[Proposition 4.1]{Boissiere.NieperWisskirchen.Sarti:enriques.varieties} or \cite[Theorem 6.4]{os}, where \(X=K_n(A)\) admits a free quotient \(Z=X/\langle \varphi \rangle\) with \(\varphi\) of order \(4| n+1\). In this case \(k=m=2\) and \(d=4\), the map \(\langle\phi\rangle\to \SMod(X)\) is injective since \(\Aut(X)\to \GL(\Homology^*(X,\mathbb{Z}))\) is faithful by \cite[Theorem 1.3]{OGUISO_2020}
\end{proof}

\bibliographystyle{alpha}

\bibliography{references}

@article{debarre2022hyper,
  title={Hyper-k{\"a}hler manifolds},
  author={Debarre, Olivier},
  journal={Milan J. Math.},
  volume={90},
  number={2},
  pages={305--387},
  year={2022},
  publisher={Springer}
}

@article{billi2022note,
  title={A note on the {N}ielsen realization problem for hyper-{K\"a}hler manifolds},
  author={Billi, Simone},
  journal={Geom. Dedicata},
  volume={219},
  number={3},
  pages={46},
  year={2025},
  publisher={Springer}
}

@article{raman2024smooth,
  title={A smooth {B}irman--{H}ilden theory for hyper-{K\"a}hler manifolds},
  author={Raman, Sidhanth},
  journal={J. Reine Angew. Math.},
  volume={2025},
  number={822},
  pages={147--178},
  year={2025},
  publisher={De Gruyter}
}

@book{gross2012calabi,
  title={Calabi-Yau manifolds and related geometries: lectures at a summer school in Nordfjordeid, Norway, June 2001},
  author={Gross, Mark and Huybrechts, Daniel and Joyce, Dominic},
  year={2012},
  publisher={Springer Science \& Business Media}
}

@article{barth1983automorphisms,
  title={Automorphisms of {E}nriques surfaces},
  author={Barth, Wolf and Peters, Chris},
  journal={Invent. Math.},
  volume={73},
  pages={383--411},
  year={1983},
  publisher={Springer-Verlag}
}

@article{billi2025nonexistenceenriquesmanifoldsog10,
      title={Non-existence of {E}nriques manifolds from {OG10} type manifolds}, 
      author={Simone Billi and Franco Giovenzana and Luca Giovenzana and Annalisa Grossi},
      year={2025},
      eprint={2501.06893},
      archivePrefix={arXiv},
      primaryClass={math.AG},
      url={https://arxiv.org/abs/2501.06893}, 
journal={arXiv preprint arXiv:2501.06893}
}

@article {Boissiere.NieperWisskirchen.Sarti:enriques.varieties,
    AUTHOR = {Boissi\`ere, Samuel and Nieper-Wi{\ss}kirchen, Marc and Sarti,
              Alessandra},
     TITLE = {Higher dimensional {E}nriques varieties and automorphisms of
              generalized {K}ummer varieties},
   JOURNAL = {J. Math. Pures Appl. (9)},
  FJOURNAL = {Journal de Math\'{e}matiques Pures et Appliqu\'{e}es. Neuvi\`eme S\'{e}rie},
    VOLUME = {95},
      YEAR = {2011},
    NUMBER = {5},
     PAGES = {553--563},
      ISSN = {0021-7824},
   MRCLASS = {14J50 (14L30)},
  MRNUMBER = {2786223},
MRREVIEWER = {Stefan Schr\"{o}er},
       DOI = {10.1016/j.matpur.2010.12.003},
       URL = {https://doi.org/10.1016/j.matpur.2010.12.003},
}

@article{OS,
    AUTHOR = {Oguiso, Keiji and Schr\"{o}er, Stefan},
     TITLE = {Enriques manifolds},
   JOURNAL = {J. Reine Angew. Math.},
  FJOURNAL = {Journal f\"{u}r die Reine und Angewandte Mathematik. [Crelle's
              Journal]},
    VOLUME = {661},
      YEAR = {2011},
     PAGES = {215--235},
      ISSN = {0075-4102},
   MRCLASS = {53C26 (32Q25)},
  MRNUMBER = {2863907},
MRREVIEWER = {Kieran G. O'Grady},
       DOI = {10.1515/CRELLE.2011.077},
       URL = {https://doi.org/10.1515/CRELLE.2011.077},
}

@article{looijenga2021teichmuller,
  title={Teichm{\"u}ller spaces and {Torelli} theorems for hyperk{\"a}hler manifolds},
  author={Looijenga, Eduard},
  journal={Math. Z.},
  volume={298},
  number={1},
  pages={261--279},
  year={2021},
  publisher={Springer}
}

@article{OGUISO_2020, title={NO COHOMOLOGICALLY TRIVIAL NONTRIVIAL AUTOMORPHISM OF GENERALIZED {K}UMMER MANIFOLDS}, volume={239}, DOI={10.1017/nmj.2018.29}, journal={Nagoya Mathematical Journal}, author={Oguiso, Keiji}, year={2020}, pages={110–122}}

@incollection{markman2011survey,
  title={A survey of {Torelli} and monodromy results for holomorphic-symplectic varieties},
  author={Markman, Eyal},
  booktitle={Complex and differential geometry},
  pages={257--322},
  year={2011},
  publisher={Springer}
}

@article{farb2021nielsen,
  title={The {N}ielsen realization problem for {K3} surfaces},
  author={Farb, Benson and Looijenga, Eduard},
  journal={J. Differential Geom.},
  volume={127},
  number={2},
  pages={505--549},
  year={2024},
  publisher={Lehigh University}
}

@article{nielsen1942abbildungsklassen,
  title={Investigations into the topology of closed two-sided surfaces.},
  author={Nielsen, Jakob},
volume={58},
pages={87-167},
number={3},
journal={Acta Math.},
  year={1932}
}

@article{kerckhoff1983nielsen,
  title={The {Nielsen} realization problem},
  author={Kerckhoff, Steven P},
  journal={Ann. of Math.},
  pages={235--265},
  year={1983},
  publisher={JSTOR}
}
\end{document}